\DeclareMathAlphabet{\mathdutchcal}{U}{dutchcal}{m}{n}
\SetMathAlphabet{\mathdutchcal}{bold}{U}{dutchcal}{b}{n}
\DeclareMathAlphabet{\mathdutchbcal}{U}{dutchcal}{b}{n}
\newcommand*{\s}{{\mathdutchcal{s}}}
\newtheorem{theorem}{Theorem}[section]
\newtheorem*{theorem*}{Theorem}
\newtheorem{lemma}{Lemma}[section]
\theoremstyle{definition}
\newtheorem*{definition*}{Definition}
\newcommand*{\E}{\mathbb{E}}
\newcommand*{\N}{\mathbb{N}}
\renewcommand{\P}{\mathbb{P}}
\newcommand*{\T}{\mathbb{T}}
\newcommand*{\V}{\mathbb{V}}
\newcommand*{\Z}{\mathbb{Z}}
\def \uoms {{\bar{\omega}_\s}}
\def \uoml {{\bar{\omega}}_{\ell}}
\renewcommand{\leq}{\leqslant}
\renewcommand{\le}{\leqslant}
\renewcommand{\geq}{\geqslant}
\renewcommand{\ge}{\geqslant}
\renewcommand{\subset}{\subseteq}
\renewcommand{\supset}{\supseteq}
\DeclareMathOperator{\prog}{prog}
\newcommand*{\doi}[1]{\href{http://dx.doi.org/\detokenize{#1}}{doi}}
\renewcommand{\baselinestretch}{1.15}
\begin{document}

\title{Monotonicity and phase diagram for multi-range percolation on oriented trees}
\author{Bernardo N. B. de Lima\footnote{UFMG-Federal University of Minas Gerais, Brazil},
{}
Leonardo T. Rolla\footnote{Argentinian National Research Council at the University of Buenos Aires},\footnote{NYU-ECNU Institute of Mathematical Sciences at NYU Shanghai, China}
{}
Daniel Valesin\footnote{University of Groningen, The Netherlands}
}
% \date{February 13, 2017}
\maketitle

\begin{abstract}
We consider Bernoulli bond percolation on oriented regular trees, where besides the usual short bonds, all bonds of a certain length are added. Independently, short bonds are open with probability $p$ and long bonds are open with probability $q$. We study properties of the critical curve which delimits the set of pairs $(p,q)$ for which there are almost surely no infinite paths.
We also show that this curve decreases with respect to the length of the long bonds.
\end{abstract}
{\footnotesize Keywords: long range percolation, monotonicity of connectivity, critical curve}

\section{Introduction}

Consider the graph having $\Z^d$ as vertex set and all edges of the form $\{x, x\pm e_i\}$ and $\{x,x\pm k\cdot e_i\}$ for some $k\ge 2$.
It was shown in~\cite{LimaSanchisSilva11} that the critical probability for Bernoulli bond percolation on this graph converges to that of $\Z^{2d}$ as $k \to \infty$.
This result, later generalized in~\cite{MartineauTassion17}, is a particular instance of Schramm's conjecture~\cite{BenjaminiNachmiasPeres11} that the percolation threshold for transitive graphs is a local property.
The convergence proved in~\cite{LimaSanchisSilva11} is conjectured to be monotone, that is, the percolation threshold for the above graph should be decreasing in the length $k$ of long edges.\footnote
{In support of this conjecture, simulations~\cite{AtmanSchnabelYY} confirm that increasing $k$ decreases the critical parameter, and the proof of~\cite[Lemma~2]{LimaSanchisSilva11} shows that replacing $k$ by a multiple of $k$ does not increase it.}

Monotonicity questions are often intriguing for being extremely simple to ask and hard to answer.
A good example~\cite{Berg07} is the following: for Bernoulli bond percolation on the usual graph $\Z^d$, prove that the probability of the origin being connected to $(n,0,\dots,0)$ monotone in $n$.
This problem is still open, except when the parameter is close to $0$ or $1$~\cite{LimaProcacciSanchis15}.
For oriented percolation on $\Z^2_+$, the probability of the origin being connected to $(m-n, m+n)$ is decreasing in $n\in\{0,\dots,m\}$ for fixed $m$; this may be obvious but the proof is not straightforward~\cite{AndjelSued08}.
In the same spirit, for unoriented percolation on $\Z^2_+$, if the parameter is smaller for horizontal edges than for vertical ones, the above probability should be larger than the probability of the origin being connected to $(m+n, m-n)$. This has only been proved under the assumption that the ratio between horizontal and vertical parameters is small enough~\cite{DeLimaProcacci04}.

For first-passage percolation, it was conjectured~\cite{HammersleyWelsh65} that the expected minimum travel time from $(0,0)$ to $(n,0)$ along paths contained in the
strip $\{(x,y):0\leq x\leq n\}$ is nondecreasing in $n$.
This question is still open, with a number of partial results~\cite{Ahlberg15,AlmWierman99,Howard01,Gouere14}.
In the negative direction, for first-passage percolation on $\Z_+\times\Z$, there is a counter-example~\cite{Berg83}
where the expected passage time from the origin to $(2,0)$ is less than the expected passage time from the origin to $(1,0)$.
Another context where strict monotonicity is expected to happen is in the case of essential enhancements as introduced in~\cite{AizenmanGrimmett91}, see also~\cite{BalisterBollobasRiordan14}.

In this paper we consider percolation on $\T_{d,k}$, the graph given by the oriented rooted $d$-ary tree ($d \ge 2$) with a root at the top, bearing the usual ``short'' downward edges plus the addition of all downward edges of length $k$, called ``long'' edges.
This is an oriented version of Trofimov's grandfather graph for $k=2$, or the great$^k$-grandfather graph for larger $k$.
We let short and long edges be open independently with probability $p$ and $q$, respectively.
The phase space $[0,1]^2$ is decomposed in two regions: a set $\mathcal{P}_k$ of pairs $(p,q)$ for which a.s.\ there are infinite open paths, and a set $\mathcal{N}_k$ of pairs for which a.s.\ there are none, see Figure~\ref{fig:phasespaceab}a.

For $p>\frac{1}{d}$ there are a.s.\ infinite open paths of short edges, and for $q>\frac{1}{d^k}$ there are a.s.\ infinite open paths of long edges.
For $dp +d^k q \leq 1$, a simple comparison with a branching process (to be given in \S\ref{sec:pcqc}) shows that a.s.\ there are no infinite open paths.
By monotonicity of the process with respect to $p$ and $q$, there is a \emph{critical curve} $\gamma_k$ joining the points $(\frac{1}{d},0)$ and $(0,\frac{1}{d^k})$ which separates $\mathcal{N}_k$ and $\mathcal{P}_k$, as depicted in Figure~\ref{fig:phasespaceab}a.
Define
\begin{equation}
\label{eq:qcandpc}
q_c(p,k) = \inf\{q: (p,q)\in\mathcal{P}_k\}
\quad
\text{and}
\quad
p_c(q,k) = \inf\{p: (p,q)\in\mathcal{P}_k\}
.
\end{equation}

\begin{figure}[t]
\centering
\hspace*{\stretch{1}}
\includegraphics[width=.48\textwidth]{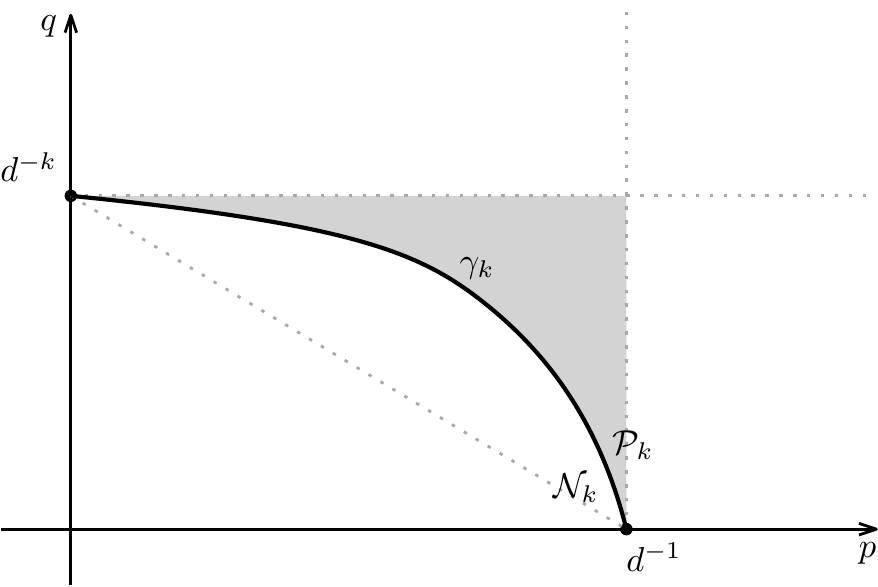}
\hspace*{\stretch{2}}
\includegraphics[width=.48\textwidth]{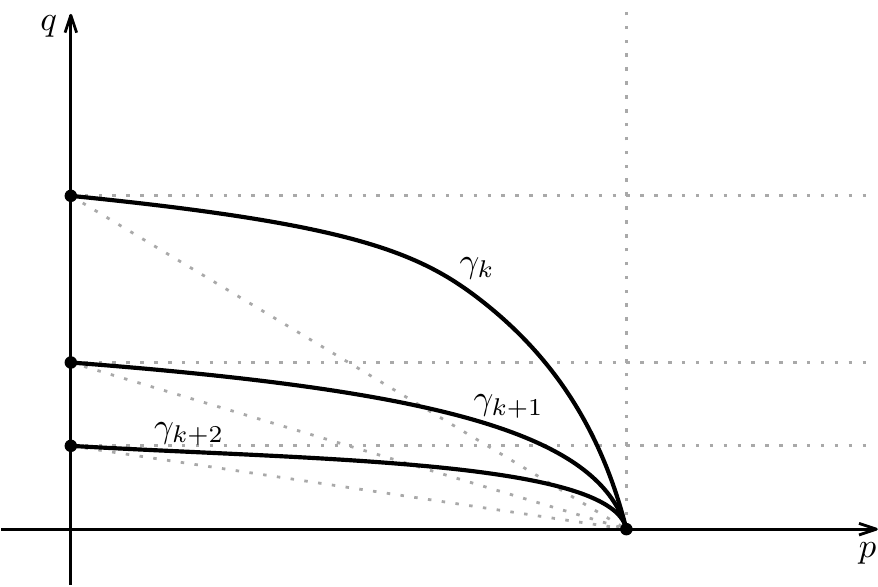}
\hspace*{\stretch{1}}
\\
\scriptsize
\hspace*{\stretch{1}}
(a)
\hspace*{\stretch{2}}
(b)
\hspace*{\stretch{1}}
\caption{Phase space and critical curve separating the percolative region $\mathcal{P}_k$ from the non-percolative region $\mathcal{N}_k$.
(a)~%
The curve stays between the three dotted lines.
In the gray region, infinite paths necessarily use both short and long edges.
(b)~%
Critical curves for different ranges $k$ meet at one point.
}
\label{fig:phasespaceab}
\end{figure}

Let $k$ be fixed. We show that $q_c$ is continuous and strictly decreasing in $p$ (equivalent formulations are that $p_c$ is strictly decreasing and continuous in $q$, that both $p_c$ and $q_c$ are continuous, or that $\gamma$ contains neither vertical nor horizontal segments).
In particular, $\gamma_k$ is described by $q=q_c(p,k)$ as well as by $p=p_c(q,k)$, and there is a non-trivial subregion of $\mathcal{P}_k$ at which infinite open paths necessarily use both long and short edges, see Figure~\ref{fig:phasespaceab}a.
A similar description is given in~\cite{IlievJansevanRensburgMadras15} for percolation with a defect plane.

We also show that $\gamma_{k+1}$ stays strictly below $\gamma_k$ for $p<d^{-1}$, and they meet only at the critical point $(d^{-1},0)$.
This means that $q_c(p,k)$ is strictly decreasing in $k$ for as long as it is positive, and analogously for $p_c(q,k)$, see Figure~\ref{fig:phasespaceab}b.

In \S\ref{sec:defresults}, we present the model and the above statements more formally.

In \S\ref{sec:pcqc}, we prove that $q_c(k,p)$ is continuous and decreasing in $p$.
In the proof,
we tile $\T_{d,k}$ by layers and consider a construction of the process where the state of tiles are sampled independently.
We then couple configurations with different values of $p$ and $q$ so that some advantage in $q$ compensates for small decreases in $p$ and vice-versa. Each comparison is done by finding one particular tile that makes no useful connections without extra open edges and at the same time makes all possible connections with their help. We learned this idea from~\cite{Teixeira06}.

In \S\ref{sec:hubs}, we show that $q_c(p,k+1)<q_c(p,k)$ for $p<d^{-1}$.
Together with the  results of \S\ref{sec:pcqc}, this inequality completes the previous description illustrated by Figure~\ref{fig:phasespaceab}b.
The proof involves a joint exploration of a percolation ``cluster'' in $\mathbb{T}_{d,k}$ and a percolation cluster in $\mathbb{T}_{d,k+1}$. The joint exploration is an algorithm in which parts of both clusters are revealed simultaneously using the same random variables.
After each step of the algorithm is concluded, there is an injective function from the revealed portion of the cluster in $\mathbb{T}_{d,k}$ to the one in $\mathbb{T}_{d,k+1}$.
When trying to ensure this, one might run into collisions, that is, situations where an edge that could potentially grow the cluster in $\mathbb{T}_{d,k}$ has as a counterpart an edge which does not grow the cluster in $\mathbb{T}_{d,k+1}$.
The challenge is thus to design the algorithm so that collisions do not occur.
We succeed in doing so by introducing a recursive procedure which alternately reveals clusters of short edges and then groups long edges, in a way that allows the comparison between the $k$ and $k+1$ scenarios.
This gives $q_c(p,k+1) \le q_c(p,k)$.
Strict inequality is obtained by extending the idea mentioned in the previous paragraph to a dynamic, hybrid construction.
When revealing the state of a whole batch of long edges at once we can use the increase in $k$ to compensate for a small decrease in $q$.

As a final remark, there seems to be no obvious way to adapt the argument just described to the graph with the vertices in $\mathbb{Z}^d$ and edges of the form $\{x,x\pm e_i\}$ and $\{x,x \pm k\cdot e_i\}$.
A similar joint exploration would lead to collisions as illustrated in Figure~\ref{fig:counter}.
Proving the inequality $p_c(k+1) \le p_c(k)$ mentioned in the previous footnote remains open, let alone strict inequality.

\begin{figure}[t]
\centering
\includegraphics[width=.8\textwidth]{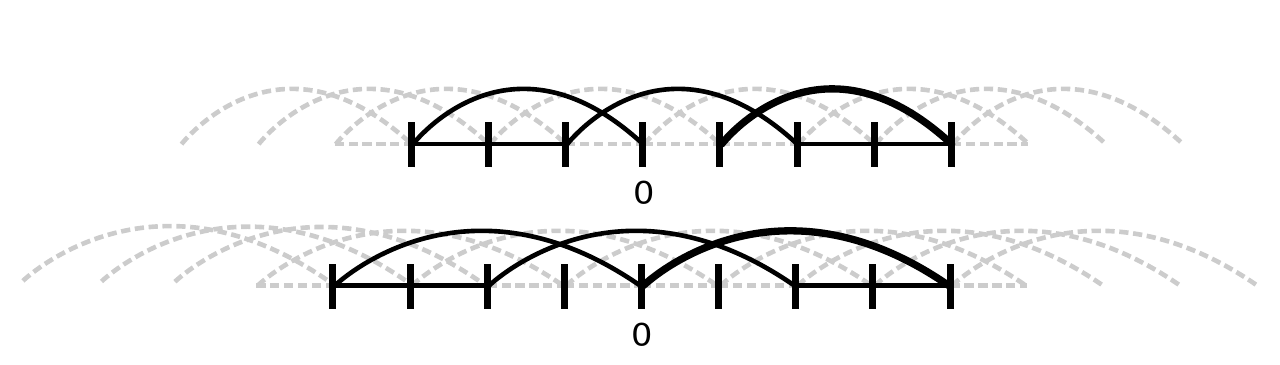}
\caption{Situation where the natural coupling of explorations which maps short edges to short edges and long edges to long edges leads to a ``collision'' in the graphs given by adding edges of length 3 and 4 to $\mathbb{Z}$. Dashed lines represent closed edges and full lines represent open edges. The bold edge being open increases the cluster of the first graph by one vertex, but has no effect on the cluster of the second graph.}
\label{fig:counter}
\end{figure}

\section{Definitions and results}
\label{sec:defresults}

Let $d\in\{2,3,4,\dots\}$ be fixed.
We denote $[d]= \{1,\ldots, d\}$, and we will make frequent use of the set
\begin{align*}
[d]_\star = \bigcup_{0\leq n < \infty} [d]^n;
\end{align*}
the set $[d]^0$ is understood to consist of a single point $o$.
Points of $[d]_\star\backslash \{o\}$ are represented as 
sequences $u = (u_1,\ldots, u_n)$. In case $u = (u_1,\ldots, u_m)$ and $v = (v_1,\ldots, v_n)$, we define the concatenation $u\cdot v = (u_1,\ldots, u_m, v_1,\ldots, v_n).$

Given $k \in \N = \{1,2,3,\dots\}$,
we define the oriented graph $\T_{d,k}$ as the graph with vertex set $\V_{d,k} = [d]_\star$ and edge set $\E_{d,k} = \E^\s_{d,k} \cup \E^\ell_{d,k}$, where 
\begin{align*}
&\E^\s_{d,k} = \{\langle u, u\cdot a\rangle: \;u \in \V_{d,k},\;a\in [d]\},
\\
&\E^\ell_{d,k} = \{\langle u, u\cdot r\rangle:\;u\in \V_{d,k},\; r\in [d]^k\}.
\end{align*}
These will be referred to as the sets of short and long edges of $\T_{d,k}$.

As the above notation suggests, we will normally use the letters $a,b$ for elements of $[d]$, the letters $r,s$ for elements of $[d]^k$ and the letters $u,v,w,x$ for general vertices of $\T_{d,k}$.
% , though there will be occasions when we need to break these conventions.%
% \marginnote{Quando exatamente vamos quebrar essas convenções?}%

Consider the process in which, independently, short edges are open with probability $p$ and long edges are open with probability $q$. Let $\P_{p,q}$ denote the corresponding probability measure.

We define the event $u \leadsto v$ that there exist $u^0,u^1,\dots,u^{n-1},u^n$ such that $u^0=u$, $u^n=v$ and the edge
$\langle u^j,u^{j+1} \rangle$ is open for all $j<n$.
The event $u \leadsto \infty$ means that $u\leadsto v$ for infinitely many $v$.
Let
$\mathcal{P}_k = \{(p,q):\P_{p,q}(o\leadsto \infty) > 0\}$, $\mathcal{N}_k = [0,1]^2 \setminus \mathcal{P}_k$, and let $p_c(q,k)$ and $q_c(p,k)$ be given by~(\ref{eq:qcandpc}).

We prove the following monotonicity property.
\begin{theorem}
\label{prop:hub}
The inequality $q_c(p,k+1) < q_c(p,k)$
holds
unless
$q_c(p,k) = 0$.
\end{theorem}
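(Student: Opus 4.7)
The proof proceeds in two stages: first the weak inequality $q_c(p,k+1)\le q_c(p,k)$, obtained via a joint exploration of open clusters in $\T_{d,k}$ and $\T_{d,k+1}$, and then its upgrade to strict inequality via a batch-revelation argument in the spirit of the tile construction of \S\ref{sec:pcqc}. Fix $p$ with $q_c(p,k)>0$ and $q>q_c(p,k)$, so that the root percolates with positive probability on $\T_{d,k}$ at parameters $(p,q)$; the goal is to exhibit some $q'<q$ for which percolation also occurs at $(p,q')$ on $\T_{d,k+1}$.

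For the weak bound I would construct, on a single probability space, a coupling between the two percolation processes together with a random injection $\phi$ from the root's open cluster $C_k\subset\V_{d,k}$ into the root's open cluster $C_{k+1}\subset\V_{d,k+1}$, mapping open paths to open paths. Since the short-edge subgraphs of $\T_{d,k}$ and $\T_{d,k+1}$ are canonically isomorphic to the rooted $d$-ary tree, short-edge randomness can be shared, and on short-edge components $\phi$ acts as this isomorphism. The exploration alternates \emph{short phases}, in which a frontier vertex $v\in C_k$ has its short-edge cluster $S_v$ revealed in both graphs and $\phi$ extended accordingly, and \emph{long phases}, in which, for each $w\in S_v$, the $d^k$ candidate long edges $\langle w,w\cdot r\rangle$ are processed in a canonical order and each is paired with a single long edge $\langle\phi(w),\phi(w)\cdot r\cdot a_r\rangle$ of $\T_{d,k+1}$ via the standard monotone coupling, with $a_r\in[d]$ chosen dynamically.

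The main obstacle is the \emph{collision issue}: whenever the $\T_{d,k}$-edge is open and would add a new vertex $w\cdot r$ to $C_k$, one must find $a_r$ with $\phi(w)\cdot r\cdot a_r\notin C_{k+1}$ so that $\phi$ remains injective. The resource that makes this possible is the factor-of-$d$ multiplicity of long-edge targets: $\phi(w)$ has $d^{k+1}$ long-edge targets in $\T_{d,k+1}$, whereas $w$ has only $d^k$ in $\T_{d,k}$. That no-collision can be enforced is verified by induction on the recursion depth, using the nested structure of the exploration: short clusters processed on the $\T_{d,k+1}$-side live in pairwise disjoint branches of $[d]^{k+1}$ below the current image, so at most $|S_v|$ target vertices are ever occupied in a given branch, leaving room for fresh extensions. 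Granting the invariant, any infinite open path in $C_k$ maps under $\phi$ to an infinite open path in $C_{k+1}$, and taking $q'=q$ in the coupling yields $q_c(p,k+1)\le q_c(p,k)$.

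To sharpen the inequality, I would replace the single-partner pairing in the long phase by a \emph{batch} coupling: reveal the entire fan of $d$ long edges $\{\langle\phi(w),\phi(w)\cdot r\cdot a\rangle:a\in[d]\}$ at parameter $q'$, and declare the $\T_{d,k}$-edge open in the coupling whenever at least one of the $d$ partners is open, taking $a_r$ to be the smallest open partner with uncollided target. Because the partner-fans for distinct $(w,r)$ are pairwise disjoint (by injectivity of $\phi$ and uniqueness of string concatenation), the coupling has the correct product marginals provided $1-(1-q')^d=q$, i.e., $q'=1-(1-q)^{1/d}$, which satisfies $q'<q$ for every $q\in(0,1)$. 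The no-collision invariant becomes easier with $d$ candidates per edge, so the same induction carries through, and this is where the hardest bookkeeping lies. Infinite percolation at $(p,q)$ on $\T_{d,k}$ then implies infinite percolation at $(p,q')$ on $\T_{d,k+1}$, whence $q_c(p,k+1)\le 1-(1-q)^{1/d}$ for every $q>q_c(p,k)$; letting $q\downarrow q_c(p,k)$ gives $q_c(p,k+1)\le 1-(1-q_c(p,k))^{1/d}<q_c(p,k)$, as required.
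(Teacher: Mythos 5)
Your plan follows the paper's broad strategy (joint exploration sharing short-edge randomness, long edges compared after inserting an extra letter), but both halves rest on steps that are not established, and the second half as written does not work. For the weak inequality, everything hinges on the no-collision invariant, and your justification (``short clusters on the $\T_{d,k+1}$-side live in disjoint branches, so at most $|S_v|$ targets are occupied, leaving room'') does not address the actual danger. Because $\phi$ shifts depths by the number of long edges used, a vertex $x'$ on a \emph{different} exploration branch can end up with $\phi(x')=\phi(w)\cdot r$, and then its $d$ short children are imaged exactly onto your candidate slots $\phi(w)\cdot r\cdot a$, $a\in[d]$; whether enough slots always remain free (and whether earlier dynamic choices of inserted letters can be made so as to prevent such coincidences) is precisely ``the challenge'' the paper flags, and it requires a real inductive argument that your sketch does not supply. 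The paper avoids per-edge pairing altogether: Lemma~\ref{lem:pisigma} decomposes the cluster via hubs with disjoint progenies, all long edges through a hub are revealed as one batch whose exit set has law $\mathscr{A}_{q,k}(\beta)$, and Lemma~\ref{lem:comparison_l} compares $\mathscr{A}_{q,k}(b)$ with $\mathscr{A}_{q',k+1}(b')$ directly, using that the latter is a union of $d$ i.i.d.\ copies of the former over the children; collisions never arise because the comparison is made between laws of whole exit sets, not edge by edge.

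The strict-inequality step has a genuine flaw beyond bookkeeping. Coupling ``the $\T_{d,k}$-edge is open'' with ``at least one of its $d$ partner edges at parameter $q'=1-(1-q)^{1/d}$ is open'' does give the right marginals, but to extend $\phi$ you need an \emph{open} partner whose target is unoccupied. With positive probability the only open partner is one whose target is already an image vertex (for instance, a slot occupied by the image of a short child of the earlier-revealed parent of $w\cdot r$), in which case $C_k$ gains the new vertex $w\cdot r$ while $C_{k+1}$ gains nothing usable, and the injection breaks; knowing that \emph{some} slot is free does not give you that some \emph{open} partner's slot is free, and you cannot re-randomize which partner is open without destroying the marginals. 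The paper gets strictness at the level of distributions instead: Lemma~\ref{lem:enhance} is applied to the $d$-fold product of exit-set laws with the special outcome $(S_1^*,\ldots,S_d^*)=(\varnothing,\text{full},\ldots,\text{full})$, producing $q'<q$ together with an almost-sure containment $A\preceq B$, so no choice is ever constrained by which individual edges happen to be open. Your final limiting step ($q\downarrow q_c(p,k)$, giving the explicit bound $q_c(p,k+1)\le 1-(1-q_c(p,k))^{1/d}$) is fine arithmetic and would even be a nice quantitative improvement, but only once the coupling it relies on is repaired along the lines of the paper's batch comparison.
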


This says that $\gamma_{k+1}$ stays under $\gamma_{k}$, and they can only intersect each other at the boundary $\{pq=0\}$, except maybe where one of them contains a vertical segment. The next result rules out the latter possibility, thus completing the picture provided in Figure~\ref{fig:phasespaceab}b.
\begin{theorem}
\label{thm:qccontinous} For each fixed $k\in\N$, the function $p \mapsto q_c(p,k)$ is continuous on $[0,1]$ and strictly decreasing on $[0,d^{-1}]$.
\end{theorem}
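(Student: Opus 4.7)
Non-strict monotonicity of $p \mapsto q_c(p,k)$ on $[0,1]$ is immediate from the standard monotone coupling. Together with the facts recalled in the introduction---that $q_c(p,k) = 0$ for $p > d^{-1}$ and that $q_c(p,k) > 0$ for $p < d^{-1}$ by the branching comparison $dp + d^k q < 1$---the theorem reduces to strict decrease and continuity on $[0, d^{-1}]$. I plan to obtain both from two exchange couplings: for every $(p,q) \in (0,1)^2$ there exist constants $c, \epsilon_0 > 0$, depending on $p, q, k$ and $d$, such that for every $\epsilon \in (0, \epsilon_0)$ (i) one can couple $\omega \sim \P_{p-\epsilon,\, q+c\epsilon}$ with $\omega' \sim \P_{p,q}$ so that whenever $o \leadsto \infty$ in $\omega'$, also $o \leadsto \infty$ in $\omega$, and (ii) symmetrically, one can couple $\omega \sim \P_{p+c\epsilon,\, q-\epsilon}$ with $\omega' \sim \P_{p,q}$ with the same property. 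Granted these, taking $q$ slightly above $q_c(p,k)$ in (i) and letting $q \downarrow q_c(p,k)$ yields $q_c(p - \epsilon, k) \le q_c(p,k) + c\epsilon$, which, combined with monotonicity, gives both left and right continuity of $q_c$. Applying (ii) in the same way yields $q_c(p + c\epsilon, k) \le q_c(p,k) - \epsilon$, which is strict decrease.

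To construct each coupling I would partition $\T_{d,k}$ into independent blocks as follows. For each vertex $v$ whose depth is a multiple of $k$, the block based at $v$ consists of $v$, all descendants $v\cdot s$ with $|s|\le k$, the short edges between them, and all long edges $\langle v\cdot s,\, v\cdot s\cdot r\rangle$ with $|s| < k$ and $r\in [d]^k$. Every short and every long edge of $\T_{d,k}$ belongs to exactly one such block, so the block configurations are mutually independent. Within each block I would equip every edge with an independent uniform $[0,1]$ random variable; $\omega'$ uses the natural thresholds $p$ and $q$, while $\omega$ is produced by a carefully designed modification yielding the correct marginals $p-\epsilon$ (short) and $q+c\epsilon$ (long) and such that, within each block, transmissivity of $\omega'$ (existence of an open path from the block root to a prescribed block leaf) implies transmissivity of $\omega$.

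The key ingredient, following the idea the paper attributes to Teixeira, is the existence of a block event $C$ of positive probability together with a short edge $e_\s$ and a long edge $e_\ell$ that are pivotal on $C$: when $C$ holds and the remaining edges of the block are as prescribed, closing $e_\s$ breaks every open path through the block, while opening $e_\ell$ restores transmission maximally. Using an auxiliary random variable one then couples the rare discrepancy ``$e_\s$ is $\omega'$-open but $\omega$-closed'' (of probability $\epsilon$, forced by the decrease of $p$) with the rare discrepancy ``$e_\ell$ is $\omega$-open but $\omega'$-closed'' (of probability $c\epsilon$, forced by the increase of $q$), so that every loss of transmissivity in $\omega$ is overcompensated by a gain. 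The main obstacle will be to design this block-level modification so as to ensure simultaneously that (a) the marginal distributions of $\omega$ and $\omega'$ are the prescribed Bernoullis; (b) within each block, transmissivity of $\omega'$ implies transmissivity of $\omega$; and (c) blocks remain independent. The constant $c=c(p,q,k,d)>0$ that emerges is essentially the probability ratio of the two pivotal sub-events on $C$, and its positivity is what drives both continuity and strict monotonicity.
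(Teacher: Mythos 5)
Your overall strategy is the paper's: tile $\T_{d,k}$ into independent blocks, use an auxiliary-randomness coupling in which a special block configuration absorbs the discrepancy (the Teixeira-style compensation), and translate the resulting domination into ``no jumps'' and ``no flat pieces'' of the critical curve. However, your two quantitative claims (i)--(ii), with a \emph{linear} trade-off $\epsilon\leftrightarrow c\epsilon$ and a coupling under which percolation at $(p,q)$ surely implies percolation at the perturbed pair, are substantially stronger than what this block construction can deliver, and your sketch of the block event does not close the gap. First, the discrepancy cannot be localized to one pivotal short edge and one pivotal long edge: since the marginal of \emph{every} short edge changes by $\epsilon$ and of \emph{every} long edge by $c\epsilon$, the two configurations cannot agree off a fixed pair of edges, so the coupling must absorb a total-variation discrepancy of order $\epsilon$ per block. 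In any coupling of the paper's type, on the discrepancy event the \emph{other} configuration is arbitrary, so what is needed is that either the dominated block transmits nothing or the dominating block transmits everything reachable, \emph{for every possible entry set} (long edges from the previous block enter at non-root vertices, so ``an open path from the block root to a prescribed leaf'' is not the right notion; this is the role of the paper's inclusion $J_{\bar{\omega}}(A)\subseteq J_{\bar{\omega}'}(A)$ for all $A$). A single long edge can never realize ``transmits everything''; realizing it forces essentially all long edges of the block to be open through the extra $c\epsilon$-layer, an event of probability of order $\epsilon^{L}$ with $L$ the number of long edges in the block, which cannot absorb the order-$\epsilon$ discrepancy. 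This is exactly why the paper states its compensation claims asymmetrically: the gain ($q<q'$, or $q'<q_0<q$) is fixed first, and only the loss in the other parameter is taken small; that weaker statement is all that is needed to exclude jump discontinuities and horizontal segments.

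Two further points. Your blocks of depth $k$ are too shallow even for the fixed-gain version: with entries at relative depths $0,\dots,k-1$, an exit at relative depth exactly $k$ below an entry of positive depth can only be reached by the short edge from level $k-1$ or by a long edge starting at the block root, so no single special configuration can simultaneously block the dominated side and give maximal transmission on the dominating side; the paper's choice of depth $2k$ ensures every exit is the endpoint of a long edge whose starting vertex lies strictly below all possible entries, which is what makes its special configuration work. Finally, your derivation of right-continuity and of strict decrease uses the constants $c(p,q),\epsilon_0(p,q)$ at base points that move toward the critical curve (e.g.\ $q\downarrow q_c(p,k)$, or base point $p_0+\epsilon$), so you would additionally need these constants to be locally uniform; the paper's qualitative formulations of conditions of the form $\P_{p',q'}(o\leadsto\infty)\geq\P_{p,q}(o\leadsto\infty)$ sidestep this. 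To repair your plan, replace (i)--(ii) by the paper's asymmetric statements, use depth-$2k$ blocks, and formulate the block comparison in terms of input-to-output sets for arbitrary entry sets.
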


We observe that, as a consequence of the above results, defining
$$p_c(k) = \inf\{p: (p,p)\in\mathcal{P}_k\},$$
we have $p_c(k+1) < p_c(k)$, as the diagonal $\{(p,p):0\leq p \leq 1\}$ intersects the critical curves $\gamma_k$ at distinct points for different values of $k$.
However, for $k\ge 2$ this conclusion can be drawn from the simpler observation that the curves $\gamma_k$ are delimited by the dotted lines in Figure~\ref{fig:phasespaceab}b.

The next result says that there is no percolation along the critical curves $\gamma_k$.%

\begin{theorem}
\label{prop:qcnopercolation}
% For each fixed $k\in\N$, the curve $\{(p,q_c(p,k));p\in[0,\frac{1}{d}]\}$ is contained in $\mathcal{N}_k$.
For $(p,q)$ on the critical curve $\gamma_k$, $\P_{p,q}(o\leadsto \infty)=0$.
\end{theorem}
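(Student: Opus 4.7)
The plan is to argue by contradiction: suppose $(p_0,q_0)\in\gamma_k$ with $\theta(p_0,q_0) = \P_{p_0,q_0}(o\leadsto\infty) > 0$, and derive an inconsistency with Theorem~\ref{thm:qccontinous}. The boundary cases on the axes are straightforward: $(p_0,q_0)=(d^{-1},0)$ yields a critical Galton--Watson process on short edges with mean offspring $1$ (extinction certain), and symmetrically $(0,d^{-k})$ is the critical branching on long edges. So assume $p_0\in(0,d^{-1})$ and $q_0\in(0,d^{-k})$, which forces $(p_0,q_0)\in\mathcal{P}_k$.

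The main step is to establish that $\mathcal{P}_k$ is open at $(p_0,q_0)$ in the sense that there exist $\epsilon,\delta>0$ with $(p_0-\epsilon,q_0-\delta)\in\mathcal{P}_k$. Once this is shown, monotonicity of $q_c$ in $p$ gives the contradiction: on the one hand $q_c(p_0-\epsilon,k)\le q_0-\delta < q_0 = q_c(p_0,k)$, while on the other hand $p_0-\epsilon<p_0$ forces $q_c(p_0-\epsilon,k)\ge q_c(p_0,k)$ (even without strict monotonicity).

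To obtain the openness, my plan is to adapt the tile-based coupling developed in the proof of Theorem~\ref{thm:qccontinous}. That coupling establishes that a small decrease of $p$ by $\epsilon$ can be compensated by a small increase of $q$ by some $\delta(\epsilon)>0$ (and symmetrically), with both couplings carried out by exhibiting a ``useless'' tile whose state can be altered without removing useful connections. To get a strict decrease in both parameters, I would run the two couplings on two disjoint collections of layered tiles in $\T_{d,k}$: on the first collection, trade $\epsilon_1$ of $p$ for a gain of $\delta_1$ in $q$; on the second, trade back $\delta_2<\delta_1$ of $q$ for a gain of $\epsilon_2<\epsilon_1$ in $p$. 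Since the two collections of tiles use disjoint edges, the trades are independent, and the net change is $(-(\epsilon_1-\epsilon_2),-(\delta_1-\delta_2))$, strictly negative in both coordinates.

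The hard part is the quantitative calibration of this two-step coupling: one must choose the relative scales of the two tile collections so that the loss in one parameter strictly dominates the compensation, uniformly. This amounts to showing that the ``trade-off slopes'' in the two directions from $(p_0,q_0)$ are not perfectly matched, which should follow from inspecting the coupling rates produced by the argument of \S\ref{sec:pcqc}. An alternative route, should this calibration prove too delicate, would be to derive the same openness from a recursive fixed-point equation for $\eta=1-\theta$ built from the subtree decomposition $\eta=\psi^d$ together with the independence of the sub-subtrees $T_{(a,r)}$ with $r\in[d]^{k-1}$, showing that its largest root in $[0,1]$ depends continuously on $(p,q)$.
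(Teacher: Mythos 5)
Your reduction of the theorem to an ``openness'' statement for $\mathcal{P}_k$ is the right idea (it is also the paper's first line), but the mechanism you propose for proving openness does not work, and this is the heart of the matter. The compensation couplings of \S\ref{sec:pcqc} are structurally one-directional: by construction (Lemma~\ref{lem:enhance} applied with a special tile configuration) they show that a \emph{decrease} in one parameter can be offset by an \emph{increase} in the other, as in \eqref{eq:cond_jump}--\eqref{eq:cond_jump2}; they never produce a dominating configuration whose parameters are both below $(p_0,q_0)$. Running the two trades on disjoint collections of tiles does not fix this: the resulting dominating process is \emph{inhomogeneous}, with long-edge parameter $q_0+\delta_1>q_0$ on the first collection and short-edge parameter $p_0+\epsilon_2>p_0$ on the second, so at no edge does it have both parameters reduced. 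Parameter changes on disjoint edge sets do not ``net out'' into a homogeneous process at averaged parameters; the only homogeneous process comparable to your inhomogeneous one by plain monotonicity is the one at the coordinatewise \emph{minimum} $(p_0-\epsilon_1,\,q_0-\delta_2)$, and it is dominated \emph{by} the inhomogeneous process, which is the useless direction. So the issue is not ``quantitative calibration of trade-off slopes''; the scheme cannot yield $(p_0-\epsilon,q_0-\delta)\in\mathcal{P}_k$ at all. Your fallback, a scalar fixed-point equation $\eta=\psi^d$, is also not available as stated: long edges couple the root to level-$k$ descendants, so the subtrees are entered from random \emph{sets} of vertices and one is forced into a set-valued (multi-type) recursion of the kind built in \S\ref{sec:hubs}; and even granting such a recursion, ``continuity of the largest root'' is not the relevant statement (extinction is the smallest fixed point), and establishing the needed continuity of the survival probability at $(p_0,q_0)$ is essentially the openness you are trying to prove.

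For comparison, the paper's proof gets openness of $\mathcal{P}_k$ by a finite-size criterion plus renormalization: if $\P_{p,q}(o\leadsto\infty)>\zeta$, one first shows (via a conditional Borel--Cantelli dichotomy) that the number $N_n$ of level-$kn$ sites connected from the root tends to infinity on the percolation event, hence for some finite $n^*$ the event $\{N_{n^*}>2k^2/\zeta\}$ has probability $>\zeta$. This event depends on finitely many edges, so its probability is a polynomial in $(p,q)$ and remains $>\zeta$ for all $(p',q')$ in a neighborhood --- in particular with both coordinates strictly smaller. Iterating this finite-volume event then dominates a supercritical branching process, giving $\P_{p',q'}(o\leadsto\infty)>0$. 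Some device of this type (a finite-volume event whose probability varies continuously in both parameters simultaneously) is what your argument is missing and what the coupling-compensation machinery cannot supply.
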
 
\begin{proof}
It is enough to prove that $\mathcal{P}_k$ is an open set in $[0,1]^2$. Define 
$$N_n = \#\left\{u \in [d]^{kn}:\; \text{there exists } v \in \cup_{i=0}^{k-1}[d]^i \text{ with }o \rightsquigarrow u\cdot v \right\},\quad n \in \mathbb{N}.$$ We claim that $N_n\rightarrow\infty$ a.s.\ on the event $o\leadsto \infty$.
Indeed, assuming $p<1$ and $q<1$, for each $j\in\N$ we have
$$
\P_{p,q} \left(\big. N_m = 0 \text{ for all } m > n \,\middle|\, N_1,\dots,N_n \right) > \sigma_j
$$
on the event that $N_n \le j$,
where $\sigma_j$ is a positive constant depending on $j$ and also on $p,q,d,k$, but not on $n$.  This shows that $\P_{p,q}(N_n = j \text{ i.o.})=0$ thus a.s.\ either $N_n\to 0$ or $N_n \to \infty$. The case $p=1$ or $q=1$ being trivial, the claim is proved.

Suppose $\theta_{p,q}:=\P_{p,q}(o\leadsto \infty)>0$ and let $\zeta<\theta_{p,q}$.
By the previous claim, there exists $n^*$ such that $\P_{p,q}(N_{n^*}> \frac{2 k^2}{\zeta} )>\zeta$.
Now observe that this probability is continuous in $(p,q)$, thus for $(p^\prime,q^\prime)$ close enough to $(p,q)$ it is still larger than $\zeta$.
From this observation, using the definition of $N_n$ and reverse union bound, there is $\ell \in \{kn^*,\dots,kn^*+k-1\}$ such that, with probability larger than $\frac{\zeta}{k}$, there are at least $\frac{2k}{\zeta}$ sites $u\in[d]^\ell$ such that $o \rightsquigarrow u$.

% we have $\P_{p^\prime,q^\prime}(N_n>\lceil\frac{4}{\theta}\rceil)>\frac{\theta}{3}$.

Therefore, the process $(N_{\ell i})_{i\in\N}$ dominates a supercritical branching process with offspring assuming values on $\{0,\lceil\frac{2k}{\zeta}\rceil\}$ and mean larger than $2$.
This implies that $\P_{p^\prime,q^\prime}(o \rightsquigarrow \infty)>0$, proving that $(p^\prime,q^\prime)\in\mathcal{P}_k$.
\end{proof}

\section{Long and short edge compensation}
\label{sec:pcqc}

The goal of this section is to prove Theorem~\ref{thm:qccontinous}. We will need the following elementary fact.

\begin{lemma}
\label{lem:enhance}
Let $P_\alpha$ denote probability measures on a given finite space $S$, parametrized by $\alpha\in[0,1]$, and such that $P_\alpha(x)$ is continuous in $\alpha$ for every $x\in S$.
Let $\kappa$ and $y$ be such that $P_{\kappa}(y)>0$.
Then for any $\alpha,\beta$ close enough to $\kappa$, there exists a coupling $(X,Y)$ such that
$X \sim P_{\alpha}$, $Y \sim P_{\beta}$ and such that, almost surely, $X=Y$ unless $X=y$ or $Y=y$.
\end{lemma}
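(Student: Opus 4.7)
The plan is to construct the coupling $(X,Y)$ by writing down its joint mass function $\mu$ on $S\times S$ directly. The requirement that $X=Y$ unless $X=y$ or $Y=y$ is the structural constraint $\mu(x,x')=0$ whenever $x\neq x'$ with both $x\neq y$ and $x'\neq y$; so the only entries of $\mu$ that can be nonzero lie on the diagonal or in the row and column indexed by $y$.

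Given this constraint, I would use the natural maximal-coupling choice on the diagonal: set $\mu(x,x)=\min\{P_\alpha(x),P_\beta(x)\}$ for every $x\neq y$. The marginal equations then force $\mu(x,y)=(P_\alpha(x)-P_\beta(x))^+$ and $\mu(y,x)=(P_\beta(x)-P_\alpha(x))^+$ for $x\neq y$, while $\mu(y,y)$ is determined by either marginal. Using $\sum_x P_\alpha(x)=\sum_x P_\beta(x)=1$, a short calculation shows that both marginals yield the same value
\[\mu(y,y)=P_\alpha(y)-\sum_{x\neq y}(P_\beta(x)-P_\alpha(x))^+,\]
and all other entries of $\mu$ are manifestly nonnegative.

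The only remaining point is to verify $\mu(y,y)\geq 0$ for $\alpha,\beta$ close to $\kappa$. By continuity of $\alpha\mapsto P_\alpha(x)$ at $\kappa$, each term $(P_\beta(x)-P_\alpha(x))^+$ tends to $0$ as $\alpha,\beta\to\kappa$, while $P_\alpha(y)\to P_\kappa(y)>0$. Since $S$ is finite, the sum is uniformly small on a neighborhood of $\kappa$, so the desired inequality follows. There is no real obstacle here: the argument is essentially a small linear-algebra bookkeeping, the key observation being that the strict positivity $P_\kappa(y)>0$ provides enough slack to route all of the mass on which $P_\alpha$ and $P_\beta$ disagree through the single row and column indexed by $y$.
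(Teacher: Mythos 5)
Your coupling is exactly the one used in the paper: the joint mass function you write down (diagonal entries $P_\alpha\wedge P_\beta$, off-diagonal mass routed through the row and column of $y$, with $\mu(y,y)=P_\alpha(y)-\sum_{x\neq y}(P_\beta(x)-P_\alpha(x))^+=1-\sum_{x\neq y}P_\alpha(x)\vee P_\beta(x)$) coincides term by term with the paper's sampling rule, and your continuity argument for $\mu(y,y)\geq 0$ near $\kappa$ matches theirs. The proof is correct and essentially identical in approach.
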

\begin{proof}
Sample the pair $(X,Y)$ as
\[
(X,Y)=
\begin{cases}
(z,z) \ \text{ w.p. } \ P_{\alpha}(z) \wedge P_{\beta}(z), \\
(y,z) \ \text{ w.p. } \ [P_{\beta}(z) - P_{\alpha}(z)]^+, \\
(z,y) \ \text{ w.p. } \ [P_{\alpha}(z) - P_{\beta}(z)]^+, \\
\end{cases}
\]
for $z\ne y$,
and
\[
(X,Y) = (y,y) \ \text{ w.p. } \ 1 - \sum_{z\ne y}P_{\alpha}(z) \vee P_{\beta}(z)
.
\]
The last term is positive for $\alpha$ and $\beta$ are close to $\kappa$ because it is positive when $\alpha=\beta=\kappa$.
This sampling only include pairs for which $X=Y$ unless $X=y$ or $Y=y$.
From the first equation we have $\P(X=z)=P_\alpha(z)$ for all $z\ne y$, which all together imply $\P(X=y)=P_\alpha(y)$, and similarly for $Y$.
\end{proof}

We define the progeny of a vertex $u \in \V_{d,k}$ as the set
\[
\prog(u) = \{ u\cdot v \in \V_{d,k}:\; v \in [d]_\star\}
,
\]
i.e.\ it is the subtree started at $u$.
The progeny of an edge is defined as the progeny of its endpoint, that is, if $e = \langle u,v\rangle$, then $\prog(e) = \prog(v)$.

We now turn to the proof of Theorem~\ref{thm:qccontinous}.
Recall that $k$ is fixed, $q_c(0)=d^{-k}$ and $q_c(p)=0$ for $p>d^{-1}$.
Let $\mathscr{C}_{p,q,k}$ denote the percolation cluster of the root in $\mathbb{T}_{d,k}$ under the measure $\mathbb{P}_{p,q}$.
(We use the word ``cluster'' to denote the set of sites which can be reached from the root, so unlike unoriented percolation it does not define an equivalence class.)
We observe that, under this measure, the expected number of open edges having $o$ as an extremity is equal to $dp +d^k q$.
If such expectation is less than one, we can embed $\mathscr{C}_{p,q,k}$ in a subcritical branching process to conclude that $P_{p,q}(o \leadsto \infty) = 0$.
Therefore,
$ q_c(p,k) \geq d^{-k} - d^{-k+1} p. $
This implies that $q_c(p,k)>0$ for $p<d^{-1}$.
Since  $q_c(p,k) \leq q_c(0,k) = d^{-k}$, we also conclude that $p\mapsto q_c(p,k)$ is continuous at $p = 0$.

The proof of Theorem~\ref{thm:qccontinous}
will thus be complete once we establish the following two facts:
\begin{equation}\label{eq:cond_jump}
\begin{split}
&\text{for all } p_0, q, q' \in (0,1) \text{ with } q < q', \text{ there exist }p,p' \text{ with } p' < p_0 < p\\
 &\hspace{5cm}\text{ such that } \P_{p',q'}(o\leadsto \infty) \geq \P_{p,q}(o\leadsto \infty);
\end{split}
\end{equation}
\begin{equation}\label{eq:cond_jump2}
\begin{split}
&\text{for all } q_0, p, p' \in (0,1) \text{ with } p < p', \text{ there exist }q,q' \text{ with } q' < q_0 < q\\
 &\hspace{5cm}\text{ such that } \P_{p',q'}(o\leadsto \infty) \geq \P_{p,q}(o\leadsto \infty).
\end{split}
\end{equation}
Indeed, condition~\eqref{eq:cond_jump} rules out jump discontinuities in the curve $q = q_c(p,k)$ for $p > 0$, and condition~\eqref{eq:cond_jump2} rules out horizontal segments in this curve for $p < d^{-1}$.

We start the proof of~\eqref{eq:cond_jump} by introducing some notation. We let $\bar{\E}_{d,k} = \bar{\E}_{d,k}^\s \cup \bar{\E}_{d,k}^\ell$, where
\begin{align*}
&\bar{\E}_{d,k}^\s = \left\{e = \langle u, v\rangle \in \E^\s_{d,k}: u \in \cup_{n=0}^{2k-1}[d]^n\right\},
\\[.2cm]&\bar{\E}^\ell_{d,k} = \left\{e = \langle u, v\rangle \in \E^\ell_{d,k}: u \in \cup_{n=0}^{2k-1}[d]^n\right\}
.
\end{align*}
Configurations in $\bar{\Omega} = \bar{\Omega}_\s \times \bar{\Omega}_\ell = \{0,1\}^{\bar{\E}_{d,k}^\s \cup \bar{\E}_{d,k}^\ell}$ are written as $\bar{\omega} =(\uoms,\uoml)$. 

Given $A \subseteq \cup_{n=0}^{k-1} [d]^n$ and $\bar{\omega} = (\uoms,\uoml)$, we define
\begin{equation}
J_{\bar{\omega}}(A) = \bigcup_{u \in [d]^{2k}}\left\{\begin{array}{l}v \in \prog(u): \; \exists u^0, \ldots, u^n \in \V_{d,k} \text{ so that }u^0 \in A, \\  u^n = v \text{ and } \langle u^i, u^{i+1}\rangle \in \bar{\E}_{d,k},\; \bar{\omega}(\langle u^i,u^{i+1}\rangle) = 1 \; \forall i \end{array} \right\}.
\end{equation}
That is, $J_{\bar{\omega}}(A)$ is the set of vertices in $\cup_{u\in[d]^{2k}} \prog(u)$ that are reachable by paths started from $A$ and consisting only of open edges of $\bar{\E}_{d,k}$. Note that in such a path, all edges have both extremities in $\cup_{n=0}^{2k-1}[d]^n$ except for the last one, which has only one extremity in $\cup_{n=0}^{2k-1}[d]^n$.
In particular, $J_{\bar{\omega}}(A) \subseteq \cup_{n=2k}^{3k-1}[d]^n$. 

Now, define the deterministic configurations $\bar{\omega}^*_{\s} \in \bar{\Omega}_\s$ and $\bar{\omega}^*_{\ell,1},\bar{\omega}^*_{\ell,2}\in\bar{\Omega}_\ell$ by setting 
\begin{align*}&\bar{\omega}^*_{\ell,1}\equiv 0,\quad \bar{\omega}^*_{\ell,2} \equiv 1 \quad \text{ and }\quad\bar{\omega}^*_{\s}(\langle u,v\rangle) = 1 \text{ if and only if } u\notin [d]^{2k-1}.
\end{align*}
Let $0<p_0<1$ and $0<q<q'<1$.
By Lemma~\ref{lem:enhance}, if $p$ and $p'$ with $p' < p_0 < p$ are chosen sufficiently close to $p_0$, then there exists a coupling of configurations
$$X = (X_\s,X_{\ell,1},X_{\ell,2}) \quad \text{ and } \quad Y= (Y_\s, Y_{\ell,1},Y_{\ell,2})$$
in $\bar{\Omega}_\s\times \bar{\Omega}_\ell\times \bar{\Omega}_\ell$ so that the following holds:
\begin{itemize}
\item
the values of $X_\s$, $X_{\ell,1}$ and $X_{\ell_2}$ in all edges are independent;
\item
$X_\s$, $X_{\ell,1}$ and $X_{\ell,2}$ assign each edge to be open with respective probabilities $p$, $q$ and $\frac{q'-q}{1-q}$;
\item
the values of $Y_\s$, $Y_{\ell,1}$ and $Y_{\ell_2}$ in all edges are independent;
\item
$Y_\s$, $Y_{\ell,1}$ and $Y_{\ell,2}$ assign each edge to be open with respective probabilities $p'$, $q$ and $\frac{q'-q}{1-q}$;
\item
the following event has probability one: \begin{equation}\label{eq:3_casos}
\{X = Y\} \cup \{X = (\bar{\omega}^*_{\s},\bar{\omega}^*_{\ell,1},\bar{\omega}^*_{\ell,2})\} \cup \{Y = (\bar{\omega}^*_{\s},\bar{\omega}^*_{\ell,1},\bar{\omega}^*_{\ell,2})\}.
\end{equation}
\end{itemize}

Now take $\bar{\omega}_\s = X_\s$, $\bar{\omega}_\ell = X_{\ell,1}$, $\bar{\omega}'_\s = Y_\s$, $\bar{\omega}'_\ell = Y_{\ell,1} \vee Y_{\ell,2}$.

The main observation is that each of the three events in~\eqref{eq:3_casos} implies that, for every $A\subseteq \cup_{n=0}^{k-1}[d]^n$,
\begin{equation}\label{eq:a_ver}
J_{\bar{\omega}}(A) \subseteq J_{\bar{\omega}'}(A).
\end{equation}
Indeed, on the first event we have $\bar{\omega}' \geq \bar{\omega}$, on the second event we have $J_{\bar{\omega}}(A)=\emptyset$, and on the third event $J_{\bar{\omega}'}(A)$ contains the set of sites $y\in\cup_{n=2k}^{3k-1}[d]^n$ that are in $\prog(x)$ for some $x\in A$, which always contains $J_{\bar{\omega}}(A)$.

Finally, with this coupling at hand, we can sample configurations $\omega,\omega' \in \{0,1\}^{\E_{d,k}}$ such that the restrictions of $\omega$ and $\omega'$ to sets of the form
\[
\left\{\langle u\cdot v, w \rangle \in \E_{d,k}: v \in \cup_{n=0}^{2k-1}[d]^n\right\}
\]
with $u \in \cup_{m\in 2\N} [d]^{mk}$ are independent and sampled from the (appropriately translated) coupling measure.
Then $\omega$ and $\omega'$ are distributed as $\P_{p,q}$ and $\P_{p',q'}$ respectively, and the cluster of the root in $\omega$ is a subset of the cluster of the root in $\omega'$.
This concludes the proof of~\eqref{eq:cond_jump}.

We now turn to the proof of \eqref{eq:cond_jump2}.
As the two proofs are very similar, we now only outline the main steps of the argument.

We let $\bar{\E}^\s_{d,k}$, $\bar{\E}^\ell_{d,k}$, $\bar{\E}_{d,k}$, $\bar{\Omega}_\s$, $\bar{\Omega}_\ell$ and $J_{\bar{\omega}}(A)$ be the same as before. A special configuration $\bar{\omega}^* \in \bar{\Omega}_\s \times \bar{\Omega}_\s \times \bar{\Omega}_\ell$ is defined as follows:
$$\bar{\omega}^*_{\s,1} \equiv 0,\quad \bar{\omega}^*_{\s,2} \equiv 1,\quad \bar{\omega}^*_\ell(\langle r,s\rangle) = 1 \text{ if and only if } r \in \cup_{n=k}^{2k-1} [d]^n.$$

Using Lemma~\ref{lem:enhance}, we obtain $q'< q_0 < q$ and a coupling of $X = (X_{\s,1},X_{\s,2},X_\ell)$ and $Y=(Y_{\s,1},Y_{\s,2}, Y_\ell)$ so that the following hold.
The values of $X_{\s,1}$, $X_{\s,2}$ and $X_{\ell}$ in all edges are independent;
$X_{\s,1}$, $X_{\s,2}$ and $X_{\ell}$ assign each edge to be open with respective probabilities $p$, $\frac{p'-p}{1-p}$ and $q$;
the values of $Y_{\s,1}$, $Y_{\s,2}$ and $Y_{\ell}$ in all edges are independent;
$Y_{\s,1}$, $Y_{\s,2}$ and $Y_{\ell}$ assign each edge to be open with respective probabilities $p$, $\frac{p'-p}{1-p}$ and $q'$;
the following event has probability one:
\begin{equation*}
\{X = Y\} \cup \{X = (\bar{\omega}^*_{\s,1},\bar{\omega}^*_{\s,2},\bar{\omega}^*_{\ell})\} \cup \{Y = (\bar{\omega}^*_{\s,1},\bar{\omega}^*_{\s,2},\bar{\omega}^*_{\ell})\}.
\end{equation*}
We then let $\bar{\omega}_\s = X_{\s,1}$, $\bar{\omega}_\ell = X_\ell$, $\bar{\omega}'_{\s} = Y_{\s,1} \vee Y_{\s,2}$ and $\bar{\omega}'_\ell = Y_\ell$.
This coupling then guarantees~\eqref{eq:a_ver} as before, which concludes the proof of Theorem~\ref{thm:qccontinous}.

\section{Comparison of different ranges}
\label{sec:hubs}

In this section we prove Theorem~\ref{prop:hub}.
The general idea behind the proof is to explore short edges until reaching a dead end, then use a coupling construction to show that one has a better chance to proceed from each dead end when $k$ is larger.

Let $u \in \V_{d,k}$ and $r = (r_1,\ldots, r_k) \in[d]^k$, so that $e= \langle u, u\cdot r\rangle \in \E^\ell_{d,k}$. We define the trace of $e$ to be the set of short edges
$$\text{trace}(e) = \{\langle u, u\cdot r_1\rangle, \langle u\cdot r_1,u\cdot (r_1,r_2)\rangle,\ldots, \langle u\cdot (r_1,\ldots, r_{k-1}),u\cdot r\rangle\}.$$

Fix $\omega = (\omega_\s,\omega_\ell)$, with $\omega_\s \in \{0,1\}^{\E^\s_{d,k}}$ and $\omega_\ell \in \{0,1\}^{\E^\ell_{d,k}}$, and a set $A \subset \V_{d,k}$.
We let $\Pi(A)$ be the cluster of $A$ in $\omega$,
that is, the set of vertices of $\T_{d,k}$ which can be reached by a path started from some vertex of $A$ and consisting of directed edges which are open in $\omega$ (note that $\Pi(A)$ depends on $A$ and $\omega$ but we omit $\omega$ from the notation; this will also be the case for further notation that we introduce). We also let $\pi(A)$ be the cluster of $A$ in $\omega_\s$, that is, the set of vertices of $\T_{d,k}$ that can be reached by a path started from some vertex of $A$ and consisting of \textit{short} edges, all of which are open in $\omega_\s$. Note that $A \subseteq \pi(A) \subseteq \Pi(A)$.

We say a short edge $e = \langle u, v\rangle \in \E^\s_{d,k}$ is a \emph{hub} for $A$ (in $\omega$) if the following two conditions hold:
\begin{equation}\label{eq:def_prog}\prog(v) \cap \pi(A) = \varnothing \qquad \text{and} \qquad
\prog(u) \cap \pi(A) \neq \varnothing.\end{equation}
We let $\sigma(A)$ denote the set of hubs for $A$ in $\omega$.

\begin{lemma}
\label{lem:pisigma}
Let $\omega \in \{0,1\}^{\E_{d,k}}$ and $A \subset \V_{d,k}$. Then,
\begin{equation}
\label{eq:prog_are_disjoint}
\text{the progenies }\prog(e)\text{ for }e\in \sigma(A) \text{ are disjoint.}
\end{equation}
Further assuming that
\begin{equation}
\text{there exists } w \in \V_{d,k} \text{ such that } A \subset \left\{ w \cdot v: v \in \cup_{n=0}^{k} [d]^n\right\}, \label{eq:assump_lem}
\end{equation}
we also have
 \begin{equation}\label{eq:prop_of_progs}\begin{split}\text{for any } e = \langle u,u\cdot r\rangle \in \E^\ell_{d,k}\text{ such that }u \in \pi(A)\text{ and }u\cdot r \notin \pi(A),\\\text{ there exists a unique }e'\in \text{trace}(e) \cap \sigma(A)\end{split}
 \end{equation}
 and
 \begin{equation}\label{eq:prop2_of_progs}\begin{split}
 \Pi(A) \text{ is the disjoint union of } \pi(A) \text{ and the sets }\\ \Pi(A) \cap \prog(e) \text{ for } e \in \sigma(A).
\end{split}\end{equation}
\end{lemma}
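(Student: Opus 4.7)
The plan is to handle the three assertions in turn, each reducing to tree bookkeeping once the right index on the trace of a long edge is identified.

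For \eqref{eq:prog_are_disjoint}, I argue by contradiction: if $e_1=\langle u_1,v_1\rangle$ and $e_2=\langle u_2,v_2\rangle$ are distinct hubs whose progenies meet, the nested structure of subtrees in $\T_{d,k}$ gives, after relabelling, $v_2\in\prog(v_1)$; since $e_1\ne e_2$ forces $v_2\ne v_1$, the unique parent $u_2$ of $v_2$ also lies in $\prog(v_1)$, whence $\prog(u_2)\subseteq\prog(v_1)$. The hub property of $e_2$ then produces an element of $\pi(A)\cap\prog(u_2)\subseteq\pi(A)\cap\prog(v_1)$, contradicting the hub property of $e_1$.

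For \eqref{eq:prop_of_progs}, I write the trace of $e=\langle u,u\cdot r\rangle$ as $e^{(i)}=\langle u^{(i-1)},u^{(i)}\rangle$ for $i=1,\ldots,k$, with $u^{(0)}=u$ and $u^{(k)}=u\cdot r$, and set
\[
i^* \;=\; \min\bigl\{\,i\in\{1,\ldots,k\}:\prog(u^{(i)})\cap\pi(A)=\varnothing\,\bigr\};
\]
the candidate hub is $e^{(i^*)}$. To see that $i^*$ is well defined I would check $\prog(u^{(k)})\cap\pi(A)=\varnothing$: any $y$ in this intersection is reached by a short-open path from some $a\in A$ along the unique tree-path $a\to y$, and since $a$ and $u^{(k)}$ are both ancestors of $y$ they are comparable. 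Either $u^{(k)}$ lies on the path and thus $u\cdot r\in\pi(A)$, contradicting the hypothesis; or $a\in\prog(u^{(k)})$, in which case the depth bound \eqref{eq:assump_lem} together with $u\in\pi(A)\subseteq\prog(w)$ pin down $|u|=|w|$ and $|a|=|u|+k$, forcing $u=w$ and $a=u\cdot r$, again contradicting $u\cdot r\notin\pi(A)$. With $i^*$ in hand, the two hub conditions at $e^{(i^*)}$ are immediate: $\prog(u^{(i^*)})\cap\pi(A)=\varnothing$ by definition, and $\prog(u^{(i^*-1)})\cap\pi(A)\neq\varnothing$ either by minimality of $i^*$ when $i^*>1$ or because $u^{(0)}=u\in\pi(A)$ when $i^*=1$. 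Uniqueness follows because any further hub $e^{(j)}$ with $j>i^*$ would require $\prog(u^{(j-1)})\cap\pi(A)\neq\varnothing$, which is impossible since $u^{(j-1)}\in\prog(u^{(i^*)})$ forces that intersection to be empty.

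For \eqref{eq:prop2_of_progs}, disjointness is immediate from \eqref{eq:prog_are_disjoint} together with the defining property $\prog(v)\cap\pi(A)=\varnothing$ of each hub $\langle u,v\rangle$. For the reverse containment, pick $x\in\Pi(A)\setminus\pi(A)$ and an open path from $A$ to $x$ that uses the minimum possible number of long edges (necessarily at least one). Let $e=\langle u,u\cdot r\rangle$ be the first long edge on this path. The initial short-only segment delivers $u\in\pi(A)$, while $u\cdot r\notin\pi(A)$: otherwise splicing a short-only path from $A$ to $u\cdot r$ with the tail of the original path would yield an open path to $x$ with strictly fewer long edges. Applying \eqref{eq:prop_of_progs} to $e$ produces a hub $e'=e^{(i^*)}\in\text{trace}(e)\cap\sigma(A)$; since $u\cdot r=u^{(k)}$ is a descendant of $u^{(i^*)}$ we have $u\cdot r\in\prog(e')$, and because the orientation of $\T_{d,k}$ forces the tail of the path from $u\cdot r$ to $x$ to stay inside $\prog(u\cdot r)$, we conclude $x\in\Pi(A)\cap\prog(e')$.

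The main subtlety is the existence of $i^*$ in \eqref{eq:prop_of_progs}: this is the one place where \eqref{eq:assump_lem} is genuinely used, since without a depth bound on $A$ the bottom of a long edge could fall inside a subtree containing a deep element of $A$, creating a descendant in $\pi(A)$ and leaving the trace devoid of hubs.
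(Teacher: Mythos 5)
Your proof is correct and follows essentially the same route as the paper: the same contradiction argument for \eqref{eq:prog_are_disjoint}, the same use of the depth bound \eqref{eq:assump_lem} to show $\prog(u\cdot r)\cap\pi(A)=\varnothing$ and hence locate a hub on the trace (you merely make the ``first index $i^*$'' step explicit, and get uniqueness from minimality of $i^*$ rather than from \eqref{eq:prog_are_disjoint}), and a spelled-out version of the deduction of \eqref{eq:prop2_of_progs} that the paper treats as immediate. The only cosmetic omission is that in the uniqueness argument you dismiss hubs $e^{(j)}$ with $j>i^*$ but not $j<i^*$, though the latter is ruled out at once by the minimality defining $i^*$.
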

\begin{proof}
To prove~(\ref{eq:prog_are_disjoint}), assume that there are two distinct hubs
\[
e = \langle u, v\rangle,\;e' = \langle u', v'\rangle \in \sigma(A):\quad\prog(e) \cap \prog(e') \neq \varnothing
.
\]
Then either $u \in \prog(v')$ or $u' \in \prog(v)$.
Without loss of generality we assume the latter. Together with~\eqref{eq:def_prog} applied to $e'$, this implies that $$\prog(v) \cap \pi(A) \supset \prog(u') \cap \pi(A) \neq \varnothing,$$ which contradicts~\eqref{eq:def_prog} applied to $e$.

Now fix an edge $e = \langle u,u\cdot r\rangle$ as in~\eqref{eq:prop_of_progs}. Consider the $k$ short edges in the trace of $e$. By the first statement, we know that at most one of these short edges is in $\sigma(A)$. In order to show that one of them is in $\sigma(A)$, it suffices to show that
\begin{equation}
\label{eq:suffices}
\prog(u) \cap \pi(A) \neq \varnothing\qquad \text{ and } \qquad \prog(u\cdot r) \cap \pi(A) = \varnothing
.
\end{equation}
The first claim of~\eqref{eq:suffices} follows from the fact that $u \in \pi(A)$; let us prove the second.
We are given that $u \cdot r \notin \pi(A)$, so it suffices to prove that $\prog(u\cdot r) \cap A = \varnothing$. For vertices $u', v'$ with $v' \in \prog(u')$, let $\text{dist}(u',v')$ denote the length of the unique path of short edges from $u'$ to $v'$.
Then, \eqref{eq:assump_lem} gives $\text{dist}(w,v) \leq k$ for all $v \in A$. If $v \in \prog(u\cdot r)$ and $v \neq u \cdot r$, then
$$\text{dist}(w,v) > \text{dist}(w,u\cdot r) = \text{dist}(w,u)+k,$$
so $v \notin A$.
We also have $u\cdot r \notin A$, so the proof of \eqref{eq:suffices} is complete.

Statement~\eqref{eq:prop2_of_progs} is an immediate consequence of~\eqref{eq:prog_are_disjoint} and~\eqref{eq:prop_of_progs}.
\end{proof}

Again fix $\omega \in \{0,1\}^{\E_{d,k}}$ and $A \subset \V_{d,k}$ satisfying~\eqref{eq:assump_lem}.
For each hub $e \in \sigma(A)$, we define 
\begin{align*}
&
R(A,e) = \{e'=\langle u',v'\rangle \in \E_{d,k}^\ell:\;u'\in\pi(A) \text{ and } e\in \text{trace}(e')\}
,\\[.2cm]
&
\bar{S}(A,e) = \{v' \in \V_{d,k}: \langle u',v'\rangle \in R(A,e) \text{ for some }u' \in \V_{d,k}\}
,\\[.2cm]
&
S(A,e) = \{v' \in \V_{d,k}: \omega(\langle u',v'\rangle) = 1 \text{ for some }  \langle u',v'\rangle \in R(A,e)\}.
\end{align*} 

Note that $S(A,e) \subseteq \bar{S}(A,e) \subseteq \prog(e)$.
Also note that, if $e_1, e_2 \in \sigma(A)$ are distinct, then $R(A,e_1)$ and $R(A,e_2)$ are disjoint, by~\eqref{eq:prog_are_disjoint}.
Finally, note that for every $e \in \sigma(A)$, we have
\begin{equation*}
\Pi(A) \cap \prog(e) = \Pi(S(A,e)),
\end{equation*}
so that~\eqref{eq:prop2_of_progs} can be restated as
\begin{equation}\label{eq:restate}
\Pi(A) = \pi(A) \cup \left(\cup_{e \in \sigma(A)} \Pi(S(A,e))\right),
\end{equation}
where the union is disjoint.

For $A$ satisfying~\eqref{eq:assump_lem}, we now let $\mathscr{C}_{p,q,k}(A)$ be the random set $\Pi(A)$ when $\omega$ is sampled from the measure $\mathbb{P}_{p,q}$ on percolation configurations on $\mathbb{T}_{d,k}$. Note that $\mathscr{C}_{p,q,k} = \mathscr{C}_{p,q,k}(\{o\})$.

We observe that, conditioning on $\pi(A)$, $\sigma(A)$ is determined and the sets $\Pi(S(A,e))$ are independent over $e \in \sigma(A)$.
Indeed, $\Pi(S(A,e))$ is determined by $\pi(A)$ and $\omega(e')$ for all
$$e' = \langle u', v'\rangle \text{ with } v' \in \prog(e).$$

The sets of edges displayed above are disjoint for distinct choices of $e \in \sigma(A)$.
Indeed, assume $e, f \in \sigma(A)$, $e \neq f$, and $e' = \langle u', v'\rangle$, $f' = \langle w', x' \rangle$ are long edges with $v' \in \text{prog}(e)$, $x'\in \text{prog}(f)$. Then, since~\eqref{eq:prog_are_disjoint} gives $\text{prog}(e) \cap \text{prog}(f) = \varnothing$, we obtain $v' \neq x'$, so $e' \neq f'$.

Guided by this consideration, we now present a recursive exploration algorithm to reveal $\mathscr{C}_{p,q,k}(A)$. The algorithm starts by applying the following two steps to the set $A$:

\emph{Step~1.}
Explore $\pi(A)$ by revealing only the edges in $\omega_\s$ that are necessary.
More precisely, grow $\pi(A)$ progressively by starting from $A$ and querying the open/closed-state of short edges one by one, each time selecting a short edge $e = \langle u, v \rangle$ such that $u$ is already included in $\pi(A)$ and $v$ is not (and also following some lexicographic-type priority rule that guarantees that the full $\pi(A)$ is explored). Note that this also determines $\sigma(A)$, hence $\bar{S}(A,e)$ for each $e \in \sigma(A)$.

\emph{Step~2.}
For each $e \in \sigma(A)$, reveal $S(A,e)$. This is the same as revealing the value of $\omega_\ell(e')$ for each long edge $e'\in R(A,e)$.

Note that, if $e = \langle u, v\rangle \in \sigma(A)$, then $S(A,e) \subseteq \{v \cdot w: w\in \cup_{n=0}^{k-1} [d]^n \}$, so that property~\eqref{eq:assump_lem} holds with $A$ replaced by $S(A,e)$.
The algorithm then proceeds by applying Steps~1 and~2 to each of the sets $S(A,e)$, which take the role of $A$. That is: in Step~1 it explores $\pi(S(A,e))$, which also reveals $\sigma(S(A,e))$, and in Step~2, for each $e' \in \sigma(S(A,e))$, it reveals $S(S(A,e),e')$. The recursion then continues to further levels. By~\eqref{eq:restate}, this reveals the whole cluster $\mathscr{C}_{p,q,k}(A)$.
 
We now want to look at the distributions of $S(A,e)$ and $\Pi(S(A,e))$ for $e \in \sigma(A)$. Although these distributions are easily understood, they are somewhat clumsy to describe, so we will need some more notation. 

First, fix $e = \langle u,v \rangle \in \sigma(A)$ with $v = (v_1,\ldots, v_n)$. Define
$$ \beta(A,e) =\{ i \in \{1,\ldots, k\} : (v_1,\ldots, v_{n-i}) \in \pi(A)\},$$
it describes which ancestors of $e$ have been reached from $A$ using short edges and could reach $\prog(e)$ using long edges (the $\omega$-state of which is not looked at).
Note that
$$R(A,e) = \{\langle (v_1,\ldots, v_{n-i}), v\cdot w\rangle: i \in \beta(A,e),\; w \in [d]^{k-i}\},$$
so that
$$\bar{S}(A,e) = \{v\cdot w: i\in\beta(A,e),\;w \in [d]^{k-i}\}.$$

Second, we define some shift mappings in $\T_{d,k}$.
Given $u \in \V_{d,k}$, we let $\uptau_u: \prog(u) \to \V_{d,k}$ be the function 
$$\uptau_u(u \cdot v) = v,\quad v \in [d]_\star.$$
If $e = \langle u,v\rangle \in \E^\s_{d,k}$, we let $\uptau_e = \uptau_v$.

Third, given $b \subset \{1,\ldots, k\}$, we let $\mathscr{A}_{q,k}(b)$ denote the distribution of the random subset of $\cup_{i \in b} [d]^{k-i} $
in which, independently, each point is included with probability $q$.

Let $A \subseteq \V_{d,k}$ satisfy~\eqref{eq:assump_lem}.
Conditioning on $\pi(A)$, for each $e \in \sigma(A)$ we have
\begin{align}
\label{eq:lawofS}\uptau_e(S(A,e)) \stackrel{(d)}{=} \mathscr{A}_{q,k}(\beta(A,e))
\end{align}
and the law of $\uptau_e(\Pi(S(A,e)))$ is equal to the law of the cluster of $B$ in $\mathbb{T}_{d,k}$, where $B$ is chosen according to $\mathscr{A}_{q,k}(\beta(A,e))$.

We finally turn to the desired comparison between $\mathscr{C}_{p,q,k}$ for different values of the parameters. Given $A, B \subseteq \V_{d,k}$, let us write $A \preceq B$ in case there exist $u, v \in [d]_\star$ such that $A \subseteq \prog(u)$ and $\uptau_u(A) \subseteq \uptau_v(B \cap \prog(v))$.
\begin{lemma}
\label{lem:comparison_l}
For any $k \in \N$ and $q \in (0,1)$, there exists $q'<q$ such that the following holds. Let $b' \subseteq \{1,\ldots, k+1\}$ and $b = b' \cap \{1,\ldots, k\}$. There exists a coupling $(A,B)$ of random sets $A, B \subseteq [d]_\star$ such that
$$A \preceq B,\quad A \stackrel{(d)}{=} \mathscr{A}_{q,k}(b) \quad\text{ and }\quad B \stackrel{(d)}{=} \mathscr{A}_{q',k+1}(b').$$ 
\end{lemma}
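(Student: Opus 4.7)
First I would reinterpret the relation $A\preceq B$. Taking $u=o$ in the definition, the length matching $(k-i) = (k+1-i) - |v|$ between elements of $\uptau_u(A)$ and of $\uptau_v(B\cap\prog(v))$ forces $|v|=1$, so $A\preceq B$ holds as soon as there exists $v\in[d]$ with $\{v\cdot a : a\in A\}\subseteq B$. Writing $B_v:=\{w\in[d]_\star:(v)\cdot w\in B\}$, this is equivalent to $A\subseteq B_v$ for some $v\in[d]$. Under $\mathscr{A}_{q',k+1}(b')$ the sets $B_1,\ldots,B_d$ restricted to the levels in $b$ are i.i.d.\ copies of $\mathscr{A}_{q',k}(b)$, while the extra component of $B$ on $[d]^0$ present when $k+1\in b'$ is an independent Bernoulli$(q')$ variable that plays no role in $\preceq$. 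It therefore suffices to couple $A\sim\mathscr{A}_{q,k}(b)$ with i.i.d.\ $B_1,\ldots,B_d\sim\mathscr{A}_{q',k}(b)$ in such a way that $A\subseteq B_v$ for some $v$.

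Second, at the degenerate value $q'=q$ there is an immediate coupling: draw $V\in[d]$ uniformly and, independently, i.i.d.\ $B_v\sim\mathscr{A}_{q,k}(b)$, and set $A:=B_V$. Any finite-dimensional joint probability equals $\P(a_1,\ldots,a_n\in A)=\sum_{v=1}^d d^{-1} q^n = q^n$, confirming $A\sim\mathscr{A}_{q,k}(b)$; and $A\subseteq B_V$ by construction, so $A\preceq B$ almost surely.

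For $q'<q$ close to $q$, my plan is to deform this construction using Lemma~\ref{lem:enhance}. On the finite space $S$ of tuples $(\omega_A,\omega_{B_1},\ldots,\omega_{B_d})$ of indicator configurations on $\cup_{i\in b}[d]^{k-i}$, I would build a family of probability measures $P_\alpha$, continuous in $\alpha$ in a neighborhood of $q$, under which $\omega_A$ has marginal $\mathscr{A}_{q,k}(b)$, each $\omega_{B_v}$ has marginal $\mathscr{A}_{\alpha,k}(b)$, the $\omega_{B_v}$'s are mutually independent, and $\omega_A\subseteq \omega_{B_v}$ for some $v$ almost surely. Choosing $y\in S$ to be the fully-populated configuration, which has positive probability under $P_q$, I would then apply Lemma~\ref{lem:enhance} to $P_q$ and $P_{q'}$: this yields a coupling whose two samples coincide except on an event where one of them equals $y$. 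Since $\omega_A\subseteq\omega_{B_v}$ holds for some $v$ on both branches (by inheritance from $P_q$ and trivially in the configuration $y$), the $P_{q'}$-component of the coupling provides the required distribution.

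\textbf{Main obstacle.} The hardest step will be constructing the family $P_\alpha$ for $\alpha$ slightly below $q$: since the marginal of $A$ exceeds that of any single $B_v$, a naive ``single-slot'' coupling cannot match the $A$ marginal, so one must use the $d\geq 2$ slots jointly to boost the inclusion probability, for instance by conditioning on $V=\arg\max_v|B_v|$ and allocating cells adaptively. Alternatively, existence of $P_\alpha$ for $\alpha$ close to $q$ can be derived from Strassen's theorem via a stochastic-dominance inequality between $|A|$ and $\max_v|B_v|$, reducing the question to a concentration estimate on binomials.
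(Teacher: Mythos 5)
Your reduction is the same as the paper's: taking $u=o$ and $v=(a)$ in the definition of $\preceq$, it suffices to couple $A\sim\mathscr{A}_{q,k}(b)$ with i.i.d.\ $B_1,\dots,B_d\sim\mathscr{A}_{q',k}(b)$ so that $A\subseteq B_a$ for some $a$ (the possible extra point at $o$ when $k+1\in b'$ can only enlarge $B$), and your coupling at the degenerate value $q'=q$ (take $A=B_V$ with $V$ uniform) is correct. But the passage to $q'<q$, which is the entire content of the lemma, is not proved. As written, your plan is circular: a measure $P_{q'}$ under which $\omega_A\sim\mathscr{A}_{q,k}(b)$, the $\omega_{B_v}$ are i.i.d.\ $\mathscr{A}_{q',k}(b)$, and $\omega_A\subseteq\omega_{B_v}$ for some $v$ almost surely \emph{is} the desired coupling; if you had it you would be done, and the subsequent appeal to Lemma~\ref{lem:enhance} adds nothing. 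If instead one reads $P_\alpha$ as the degenerate coupling run at parameter $\alpha$ (so that $A$ is extracted from the $P_q$-sample and $B$ from the $P_{q'}$-sample of the pair produced by Lemma~\ref{lem:enhance}), then your choice of $y$ as the fully populated configuration fails: on the branch where the $P_q$-sample equals $y$, $A$ is the full set while the $P_{q'}$-sample need not contain a full slot, so $A\subseteq B_v$ can fail. You correctly flag this marginal mismatch as the main obstacle, but neither of your suggested fixes is carried out.

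The paper's solution is a small but essential twist on exactly this scheme: apply Lemma~\ref{lem:enhance} to the family of laws of the $d$-tuple of i.i.d.\ $\mathscr{A}_{\alpha,k}(b)$ sets (no $A$-coordinate and no inclusion constraint built in), with the \emph{asymmetric} special configuration $y=(S_1^*,\dots,S_d^*)$ where $S_1^*=\varnothing$ and $S_2^*=\dots=S_d^*=\cup_{i\in b}[d]^{k-i}$, and then set $A:=X_1$ and build $B$ from $(Y_1,\dots,Y_d)$. On $\{X=Y\}$ one has $A=Y_1\subseteq B_1$; on $\{X=y\}$ one has $A=\varnothing$; on $\{Y=y\}$ the slot $B_2$ is the full set (here $d\ge 2$ is used). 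The asymmetry of $y$ is precisely what lets a single special configuration serve both branches, and it is the ingredient missing from your proposal. Your Strassen-type fallback could in principle be completed --- $|A|\sim\mathrm{Bin}(N,q)$ with $N=|\cup_{i\in b}[d]^{k-i}|$ is stochastically dominated by $\max_v|B_v|$ for $q'$ close enough to $q$, since $1-(1-x)^d>x$ for $x\in(0,1)$ and $d\ge2$ plus continuity in $q'$, and exchangeability lets one upgrade a cardinality coupling to a set coupling with the correct marginals by taking $A$ uniform among $|A|$-subsets of $B_V$ --- but none of this is in the proposal, so as it stands the lemma is not established.
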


With this lemma at hand, we are ready to conclude the proof of Theorem~\ref{prop:hub}.
Fix $p,q\in (0,1)$ and $k \in \mathbb{N}$, and choose $q'$ corresponding to $k$ and $q$ in Lemma~\ref{lem:comparison_l}. The idea is to compare the explorations of $\mathscr{C}_{p,q,k}$ and $\mathscr{C}_{p,q',k+1}$ using coupling. Recall that our algorithm to explore a cluster proceeds by the iterative application of two steps.
Step~1 grows a portion of the cluster using only short edges, so it can be taken as the same for both explorations, since short edges have the same probability of being open in both.
Step~2 inspects ``exit routes", using long edges, from the portion of cluster revealed in Step~1; Lemma~\ref{lem:comparison_l} guarantees that this is better (in the sense of $\preceq$-domination) for $\mathscr{C}_{p,q',k+1}$ than for $\mathscr{C}_{p,q,k}$.

Let us now present the coupling of explorations more formally.
Note that we are dealing with percolation in the two graphs $\mathbb{T}_{d,k}$ and $\mathbb{T}_{d,k+1}$ simultaneously; these graphs have the same set of vertices (namely, $[d]_\star$) and same set of short edges, but the long edges differ.
A set $A \subset [d]_\star$ satisfying condition \eqref{eq:assump_lem} for $k$ also satisfies it when $k$ is replaced by $k+1$.
For such a set, and for $e \in \sigma(A)$, instead of $S(A,e)$ we will now write $S_k(A,e)$ and $S_{k+1}(A,e)$ to distinguish this set in the two graphs.

The coupled exploration of $\mathscr{C}_{p,q,k}$ and $\mathscr{C}_{p,q',k+1}$ starts with revealing $\pi(\{o\})$, which we can take as the same in both clusters. Thus, $\sigma(\{o\})$ is also the same in both graphs, and we enumerate
$$\sigma(\{o\}) = \{e^1,\ldots, e^N\}.$$
Also write
$$A^i = S_k(\{o\},e^i),\qquad B^i = S_{k+1}(\{o\},e^i),\qquad i=1,\ldots, N.$$
By Lemma~\ref{lem:comparison_l}, these can be sampled with $A^i \preceq B^i$, so there exist $u^i,v^i \in [d]_\star$ and $\tilde{B}^i \subseteq B^i$ such that $$A^i \subset \text{prog}(u^i),\; \tilde{B}^i \subset \text{prog}(v^i),\;\uptau_{u^i}(A^i) = \uptau_{v^i}(\tilde{B}^i).$$
The second level of the exploration then proceeds as follows.
For each $i \in \{1,\ldots, N\}$, take $\pi(\uptau_{u^i}(A_i))$ and $\pi(\uptau_{v^i}(\tilde{B}^i))$ as the same in both clusters, enumerate
$$\sigma(\uptau_{u^i}(A^i)) = \sigma(\uptau_{v^i}(\tilde{B}^i)) =  \{e^{i,1},\ldots, e^{i,N_i}\},$$
and let
$$A^{i,j} = S_k(\uptau_{u^i}(A^i),e^j),\;B^{i,j} = S_{k+1}(\uptau_{v^i}(\tilde{B}^i),e^j),\qquad j = 1,\ldots, N_i,$$
which can be sampled with $A^{i,j} \preceq B^{i,j}$ for each $j$. Further levels are then carried out in the same way. The construction guarantees that $\mathscr{C}_{p,q,k}$ is embedded in $\mathscr{C}_{p,q',k+1}$, concluding the proof of Theorem~\ref{prop:hub}. It remains only to prove the previous lemma.

\begin{proof}
[Proof of Lemma~\ref{lem:comparison_l}]
We can assume that $k +1 \notin b'$, so that $b = b'$. In that case, for $\hat{q} \in (0,1)$ and $\hat{B}$ a random subset of $[d]_\star$,
\begin{equation}
\hat{B} \sim \mathscr{A}_{\hat{q},k+1}(b) \quad \text{if and only if} \quad
\begin{array}{l}\uptau_{(1)}(\hat{B}),\ldots, \uptau_{(d)}(\hat{B}) \text{ i.i.d. }\\\text{ and distributed as } \mathscr{A}_{\hat{q},k}(b).
\end{array}
\end{equation}
We now define sets $S_1^*, \ldots, S_d^* \subset [d]_\star$ by
$$
S_1^* = \varnothing, \qquad S_2^*,\ldots, S_d^* = \bigcup_{i \in b}\; [d]^{k-i}
.
$$
By Lemma~\ref{lem:enhance}, there exists $q' < q$ and a coupling of random sets $X_1,\ldots, X_d$, $Y_1,\ldots, Y_d \subset [d]_\star$ so that $X_1,\ldots, X_d$ are independent and distributed as $\mathscr{A}_{q,k}(b)$, $Y_1,\ldots, Y_d$ are independent and distributed as $\mathscr{A}_{q',k}(b)$ and the following event has probability 1:
\begin{equation}\begin{split}
&\{(X_1,\ldots, X_d) = (Y_1,\ldots, Y_d)\} \cup \{(X_1,\ldots, X_d) = (S_1^*,\ldots, S_d^*)\} \\&\hspace{6cm}\cup \{ (Y_1,\ldots, Y_d) = (S_1^*,\ldots, S_d^*)\}.\end{split}
\end{equation}
The desired conclusion now follows by setting
\[
A = X_1,\qquad B = \cup_{a \in [d]} \{a\cdot u: u \in Y_a \}
.
\qedhere
\]
\end{proof}

\section*{Acknowledgements}

The authors would like to thank Aernout van Enter for helpful discussions, and Gábor Pete for pointing out an inaccurate citation in an earlier version of this paper. B.N.B.L. would like to thank the University of Groningen and D.V. would like to thank NYU-Shanghai for support and hospitality.
This project was supported by grants
CNPq 309468/2014-0, FAPEMIG (Programa Pesquisador Mineiro),
PIP 11220130100521CO,
PICT-2015-3154, PICT-2013-2137, PICT-2012-2744,
Conicet-45955 and MinCyT-BR-13/14.

\renewcommand{\baselinestretch}{1}
\setlength{\parskip}{0pt}
\small

\bibliographystyle{bib/leo}
\bibliography{bib/leo}

\begin{thebibliography}{BBR14}
\expandafter\ifx\csname urlstyle\endcsname\relax
  \providecommand{\doi}[1]{doi:\discretionary{}{}{}#1}\else
  \providecommand{\doi}{doi:\discretionary{}{}{}\begingroup
  \urlstyle{rm}\Url}\fi

\bibitem[AG91]{AizenmanGrimmett91}
M.~\textsc{Aizenman}, G.~\textsc{Grimmett}.
\newblock \emph{Strict monotonicity for critical points in percolation and
  ferromagnetic models}.
\newblock J Statist Phys \textbf{63}:817--835, 1991.
\newblock \doi{10.1007/BF01029985}.

\bibitem[Ahl15]{Ahlberg15}
D.~\textsc{Ahlberg}.
\newblock \emph{Asymptotics of first-passage percolation on one-dimensional
  graphs}.
\newblock Adv in Appl Probab \textbf{47}:182--209, 2015.
\newblock \doi{10.1239/aap/1427814587}.

\bibitem[AS08]{AndjelSued08}
E.~D. \textsc{Andjel}, M.~\textsc{Sued}.
\newblock \emph{An inequality for oriented 2-{D} percolation}.
\newblock In \emph{In and out of equilibrium. 2}, vol.~60 of \emph{Progr.
  Probab.}, pp. 21--30. Birkh\"auser, Basel, 2008.
\newblock \doi{10.1007/978-3-7643-8786-0_2}.

\bibitem[AS10]{AtmanSchnabelYY}
A.~P.~F. \textsc{Atman}, M.~\textsc{Schnabel}, 2010.
\newblock Private communication.

\bibitem[AW99]{AlmWierman99}
S.~E. \textsc{Alm}, J.~C. \textsc{Wierman}.
\newblock \emph{Inequalities for means of restricted first-passage times in
  percolation theory}.
\newblock Combin Probab Comput \textbf{8}:307--315, 1999.
\newblock \doi{10.1017/S0963548399003843}.

\bibitem[BBR14]{BalisterBollobasRiordan14}
P.~\textsc{Balister}, B.~\textsc{Bollob{\'a}s}, O.~\textsc{Riordan}.
\newblock \emph{Essential enhancements revisited}, 2014.
\newblock Unpublished. \href{http://arxiv.org/abs/1402.0834}{arXiv:1402.0834}.

\bibitem[Ber83]{Berg83}
J.~\textsc{van~den Berg}.
\newblock \emph{A counterexample to a conjecture of {J}. {M}. {H}ammersley and
  {D}. {J}. {A}. {W}elsh concerning first-passage percolation}.
\newblock Adv in Appl Probab \textbf{15}:465--467, 1983.
\newblock \doi{10.2307/1426447}.

\bibitem[Ber07]{Berg07}
---{}---{}---, 2007.
\newblock Private communication.

\bibitem[BNP11]{BenjaminiNachmiasPeres11}
I.~\textsc{Benjamini}, A.~\textsc{Nachmias}, Y.~\textsc{Peres}.
\newblock \emph{Is the critical percolation probability local?}
\newblock Probab Theory Related Fields \textbf{149}:261--269, 2011.
\newblock \doi{10.1007/s00440-009-0251-5}.

\bibitem[Gou14]{Gouere14}
J.-B. \textsc{Gou\'er\'e}.
\newblock \emph{Monotonicity in first-passage percolation}.
\newblock ALEA Lat Am J Probab Math Stat \textbf{11}:565--569, 2014.
\newblock \href{http://alea.impa.br/articles/v11/11-26.pdf}{pdf}.

\bibitem[How01]{Howard01}
C.~D. \textsc{Howard}.
\newblock \emph{Differentiability and monotonicity of expected passage time in
  {E}uclidean first-passage percolation}.
\newblock J Appl Probab \textbf{38}:815--827, 2001.
\newblock \href{https://projecteuclid.org/euclid.jap/1011994174}{url}.

\bibitem[HW65]{HammersleyWelsh65}
J.~M. \textsc{Hammersley}, D.~J.~A. \textsc{Welsh}.
\newblock \emph{First-passage percolation, subadditive processes, stochastic
  networks, and generalized renewal theory}.
\newblock In \emph{Proc. {I}nternat. {R}es. {S}emin., {S}tatist. {L}ab.,
  {U}niv. {C}alifornia, {B}erkeley, {C}alif}, pp. 61--110. Springer-Verlag, New
  York, 1965.
\newblock \doi{10.1007/978-3-642-99884-3_7}.

\bibitem[IRM15]{IlievJansevanRensburgMadras15}
G.~K. \textsc{Iliev}, E.~J. \textsc{Janse~van Rensburg}, N.~\textsc{Madras}.
\newblock \emph{Phase diagram of inhomogeneous percolation with a defect
  plane}.
\newblock J Stat Phys \textbf{158}:255--299, 2015.
\newblock \doi{10.1007/s10955-014-1125-5}.

\bibitem[LP04]{DeLimaProcacci04}
B.~N.~B. \textsc{de~Lima}, A.~\textsc{Procacci}.
\newblock \emph{A note on anisotropic percolation}.
\newblock Lett Math Phys \textbf{70}:223--229, 2004.
\newblock \doi{10.1007/s11005-004-4297-0}.

\bibitem[LPS15]{LimaProcacciSanchis15}
B.~N.~B. \textsc{de~Lima}, A.~\textsc{Procacci}, R.~\textsc{Sanchis}.
\newblock \emph{A remark on monotonicity in {B}ernoulli bond percolation}.
\newblock J Stat Phys \textbf{160}:1244--1248, 2015.
\newblock \doi{10.1007/s10955-015-1284-z}.

\bibitem[LSS11]{LimaSanchisSilva11}
B.~N.~B. \textsc{de~Lima}, R.~\textsc{Sanchis}, R.~W.~C. \textsc{Silva}.
\newblock \emph{Critical point and percolation probability in a long range site
  percolation model on {$\mathbb{Z}^d$}}.
\newblock Stochastic Process Appl \textbf{121}:2043--2048, 2011.
\newblock \doi{10.1016/j.spa.2011.05.009}.

\bibitem[MT17]{MartineauTassion17}
S.~\textsc{Martineau}, V.~\textsc{Tassion}.
\newblock \emph{Locality of percolation for abelian {C}ayley graphs}.
\newblock Ann Probab \textbf{45}:1247--1277, 2017.
\newblock \doi{10.1214/15-AOP1086}.

\bibitem[Tei06]{Teixeira06}
A.~Q. \textsc{Teixeira}.
\newblock \emph{Comparison between oriented and directed bond percolation on
  {$\mathbb{Z}^2$}}, 2006.
\newblock Oral communication.

\end{thebibliography}

\end{document}